\definecolor{blau}{rgb}{0.1,0.0,0.9}
\definecolor{gruen}{cmyk}{1.0,0.2,0.7,0.07}
\definecolor{mag}{cmyk}{0.0,0.9,0.3,0.0}
\newtheorem{theorem}{Theorem}[section]
\newtheorem{lemma}[theorem]{Lemma}
\newtheorem{property}[theorem]{Property}
\theoremstyle{definition}
\newtheorem{problem}[theorem]{Problem}
\newtheorem{definition}[theorem]{Definition}
\begin{document}
\date{\today}
\title{Latin cubes with forbidden entries}

\author{
{\sl Carl Johan Casselgren}\footnote{Department of Mathematics, Link\"oping University, SE-581 83 Link\"oping, Sweden. \newline
{\it E-mail address:} carl.johan.casselgren@liu.se  }
\and {\sl Klas Markstr\"om}\footnote{Department of Mathematics, 
Ume\aa\enskip University, 
SE-901 87 Ume\aa, Sweden.
{\it E-mail address:} klas.markstrom@umu.se
}
\and {\sl Lan Anh Pham }\footnote{Department of Mathematics, 
Ume\aa\enskip University, 
SE-901 87 Ume\aa, Sweden.
{\it E-mail address:} lan.pham@umu.se
}
}
\maketitle

\bigskip
\noindent
{\bf Abstract.}
We consider the problem of constructing Latin cubes subject to the condition
that some symbols may not appear in certain cells. 
We prove that there is 
a constant $\gamma > 0$ such that if $n=2^k$ and $A$ is
$3$-dimensional $n\times n\times n$ array where every cell contains at most
$\gamma n$ symbols, and every symbol occurs at most $\gamma n$ times in 
every line of $A$, then $A$ is {\em avoidable}; that is, there is
a Latin cube $L$ of order $n$ such that for every $1\leq i,j,k\leq n$,
the symbol in position $(i,j,k)$ of $L$ does not appear in the corresponding
cell of $A$.

\bigskip

\noindent
\small{\emph{Keywords: Latin cube, Latin square, list coloring}}

\section{Introduction}
Consider an $n \times n$ array $A$
in which 
every cell $(i,j)$
contains a subset $A(i,j)$ of the symbols in $[n]=\{1, \dots,n \}$.
If every cell contains at most $m$ symbols, and every symbol occurs
at most $m$ times in every row and column, then $A$ is an {\em $(m,m,m)$-array}.
Confirming a conjecture by H\"aggkvist \cite{Haggkvist}, it was proved in
\cite{AndrenCasselgrenOhman} that there is a constant $c>0$ such that if
$m \leq cn$ and $A$ is an $(m,m,m)$-array, then $A$
 is {\em avoidable};
that is, there is a Latin square $L$ such that for every $(i,j)$ the symbol in
position $(i,j)$ in $L$ is not in $A(i,j)$ 
(see also \cite{Lina, AndrenCasselgrenMarkstrom}).
The purpose of this note is to prove an analogue of this result for Latin cubes
of order $n=2^k$.

In order to make this precise, we imagine a $3$-dimensional array having
layers stacked on top of each other; we shall refer to
such a $3$-dimensional array as a {\em cube}. 
Now, a cube has {\em lines} in three directions
obtained from fixing two coordinates
and allowing the third to vary. The lines obtained by varying the 
first, second, and third coordinates will be 
referred respectively as {\em columns}, {\em rows}, and {\em files}. 
The first,
second, and third coordinates themselves will be referred to 
as the indices of the rows, columns, and files.

A \textit{Latin cube} $L$ of order $n$ on the symbols $\{1,\dots,n\}$ 
is an $n \times n \times n$ cube  such that each symbol in $\{1,\dots,n\}$ appears
exactly once in each row, column and file. The symbol in position $(i,j,k)$
of $L$ is denoted by $L(i,j,k)$.    Latin cubes have been studied by a number of authors, both with respect to enumeration 
and e.g. extension from partial cubes.  An extensive survey of early results can be found in \cite{MckWan}.

An  $n \times n \times n$ cube where 
each cell contains a subset of the symbols in the set $\{1,\dots,n\}$ is called an 
\textit{$(m,m,m,m)$-cube (of order $n$)} if the following conditions are 
satisfied:
\begin{itemize}
\item[(a)] No cell contains a set with more than $m$ symbols. 
\item[(b)] Each symbol occurs at most $m$ times in each row.
\item[(c)] Each symbol occurs at most $m$ times in each column.
\item[(d)] Each symbol occurs at most $m$ times in each file.
\end{itemize}

Let $A(i,j,k)$ denote the set of symbols in the cell $(i,j,k)$ of $A$. 
If we simplify notation, and write $A(i,j,k)=q$ if the set of 
symbols in cell $(i,j,k)$ of $A$ is $\{q\}$, then 
a $(1,1,1,1)$-cube is a {\em partial Latin cube},
and a \textit{Latin cube} $L$ is simply a $(1,1,1,1)$-cube
with no empty cell.

Given an $(m,m,m,m)$-cube $A$ of order $n$, a Latin cube $L$
of order $n$ \textit{avoids} $A$ if
there is no cell $(i,j,k)$ 
of $L$ such that $L(i,j,k) \in A(i,j,k)$;
if there is such a Latin cube, then
$A$ is \textit{avoidable}. 

Problems on extending partial Latin cubes have been studied for a long time, with the earliest results appearing in the 1970s \cite{Cruse}; in the more recent literature we have  \cite{Bri, Bryant,KuhlDenley,DenleyOhman}. The more general problem of constructing Latin cubes subject to the condition that some
symbols cannot appear in certain cells seems to be a hitherto quite unexplored
line of research.  Our main result is the following, which establishes an analogue of
the main result of \cite{Lina}, which considered Latin squares,  for Latin cubes.
\begin{theorem}
\label{maintheorem}
There is a positive constant $\gamma$ such that if $t \geq 30$ and 
$m \leq \gamma 2^t$, then any $(m,m,m,m)$-cube $A$ of order $2^t$ is avoidable.
\end{theorem}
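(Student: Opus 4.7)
The plan is to build the desired Latin cube $L$ by a randomized construction on the group $G := \mathbb{F}_2^t$ and then verify avoidance by a Lov\'asz Local Lemma-type argument. Identify $[2^t]$ with $G$ and take the base Latin cube
\[
L_0(i,j,k) := i + j + k,
\]
which is a Latin cube since $x \mapsto x + a$ is a bijection of $G$ for every fixed $a$. Introduce randomness by sampling three independent uniformly random bijections $\rho_1, \rho_2, \rho_3 \colon G \to G$ and setting
\[
L(i,j,k) := \rho_1(i) + \rho_2(j) + \rho_3(k).
\]
This remains a Latin cube, and for each cell $(i,j,k)$ and each symbol $s \in G$ the value $L(i,j,k)$ is uniform on $G$, so the \emph{bad event} $B_{i,j,k,s} := \{L(i,j,k) = s\}$ associated with a forbidden pair $s \in A(i,j,k)$ has probability exactly $1/n$.

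Next I would analyze dependencies. The event $B_{i,j,k,s}$ depends only on the triple $(\rho_1(i), \rho_2(j), \rho_3(k))$, hence is mutually independent of every $B_{i',j',k',s'}$ for which $i \neq i'$, $j \neq j'$, and $k \neq k'$. Using the $(m,m,m,m)$-cube conditions one obtains a bound of order $m n^2$ on the number of bad events sharing at least one coordinate with $(i,j,k,s)$. A direct application of the symmetric LLL with $p = 1/n$ and dependency degree $d$ of this size is far too weak, so the technical heart of the argument is to refine this count.

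Two routes, either of which I would attempt: (i) a Lopsided/asymmetric LLL, exploiting that for a uniformly random bijection $\rho$ on $G$ the outputs $\rho(a)$ and $\rho(b)$ for $a \neq b$ are ``almost independent'' (uniform on ordered distinct pairs), so events sharing exactly one coordinate are only mildly positively correlated and can be absorbed into the non-dependency side after conditioning on the shared $\rho_\ell$-value; (ii) a multi-stage random construction in which one first exposes $\rho_1$, reducing the problem to a collection of Latin-square avoidance problems inside the layers in direction $1$, and then appeals (in a black-box or modified form) to the Latin square result of \cite{Lina, AndrenCasselgrenOhman}. Both routes require careful conditional probability estimates that use both the $(m,m,m,m)$-cube hypothesis and the $\mathbb{F}_2^t$ structure.

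The main obstacle, and almost certainly the place where the hypothesis $t \geq 30$ is forced, is closing the LLL inequality after the refined dependency count is in place; this is a computation over intricate constants, and one must also verify that the residual events truly become independent under the chosen conditioning scheme. Once these estimates are obtained, the LLL yields
\[
\Pr\Bigl[\,\bigcap_{(i,j,k,s)} \overline{B_{i,j,k,s}}\,\Bigr] > 0,
\]
producing a Latin cube $L$ that avoids $A$, as required.
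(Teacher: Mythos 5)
Your plan cannot work in the form stated, and the obstruction is not the technical difficulty of closing an LLL computation but the fact that the statement you would be asking the LLL to certify is false. Your random cube always stays inside the family $\{(i,j,k)\mapsto\rho_1(i)+\rho_2(j)+\rho_3(k)\}$ (or, allowing an output bijection as well, inside a family of at most $(n!)^4\le e^{4n\ln n}$ cubes, the isotopy class of the Boolean cube). This family is far too small to avoid every sparse $A$. Indeed, let $A$ place in each cell a single uniformly random forbidden symbol, independently over cells. For any fixed Latin cube $L$ the probability that $L$ avoids $A$ is $(1-1/n)^{n^3}\le e^{-n^2}$, so the expected number of cubes in the family avoiding $A$ is at most $e^{4n\ln n-n^2}\to 0$; meanwhile, with high probability every symbol occurs only $O(\log n)$ times in every line of $A$, so $A$ is a perfectly legitimate $(m,m,m,m)$-cube with $m\le\gamma n$ for $t\ge 30$. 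Hence there exist admissible arrays $A$ that defeat \emph{every} cube of your form, and no refinement of the dependency analysis --- lopsided, conditioned, or otherwise --- can yield a positive probability of avoidance within that sample space. This is precisely why the paper does not attempt to avoid $A$ outright by permuting the Boolean cube: its first lemma only uses the random permutations (a union-bound count over $(n!)^4$ quadruples, not an LLL) to obtain a cube whose conflicts with $A$ are few and well spread (at most $\kappa n$ per row, column, file, symbol-set and transversal-set, with every cell in many ``allowed $3$-cubes''), and the essential second step repairs all remaining conflicts by swapping on a set of pairwise disjoint allowed $3$-cubes, which takes the construction outside the isotopy class of the Boolean cube. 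That local-repair mechanism (and the bookkeeping of blocks, symbol-sets and transversal-sets needed to keep the swaps disjoint and conflict-free) is the missing idea in your proposal.

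Your alternative route (ii) has a separate gap: exposing $\rho_1$ and then invoking the Latin square avoidance theorems layer by layer produces, for each row layer, \emph{some} avoiding Latin square, but these theorems give no control that would make the chosen squares mutually compatible, i.e.\ make each symbol occur exactly once in every file across the layers. Stitching the layers together is exactly the three-dimensional difficulty; the paper handles it by working with $3$-cubes (the three-dimensional analogue of intercalates) inside a cube isomorphic to the Boolean one, where every cell lies in $n-1$ such $3$-cubes, rather than by assembling independently chosen layers.
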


The restriction on the order of the cube is not believed to be necessary, but as for Latin squares,  general orders are expected to require  far more technical proof
(unless  some completely new method is invented).  
Our proof establishes this result for a small value of $\gamma$ which we believe  to be far from the optimal one, much like 
the case for the similar results for Latin squares.   We know from 
\cite{CulterOhman} that $\gamma \leq \frac{1}{3}$, since that is an upper bound for 
the corresponding result for Latin squares, and every $n\times n$ 
sub-array of an  avoidable  $(m,m,m,m)$-cube of order $n$ must be avoidable
(in the sense that there is an $n\times n$ Latin square that avoids this array).
An interesting  question is  how sparse an unavoidable  $(m,m,m,m)$-cube can be if every square $n \times n$ sub-array is avoidable.

\begin{problem}
\label{problem}
For	how small $\gamma'=\frac{m}{n}$ does there exist an unavoidable  $(m,m,m,m)$-cube $A$ of order $n$,  where every square sub-array 
of order $n$  is avoidable for Latin squares?
\end{problem}

We note that $\gamma' \leq 1/2$, since there
are unavoidable $(n/2,n/2,n/2,n/2)$-cubes of order
$n$ that satisfies the condition in Problem \ref{problem}; such a cube can
be obtained by dividing an $n\times n\times n$ cube into $8$ subcubes of
equal order $n/2$, and putting symbols $1,\dots, n/2$ in all cells of two
subcubes in ``opposite'' corners of the larger cube.

We may also note that the main result of this paper, as well as the problem of extending partial Latin cubes, can be recast as list edge coloring problems on the 
complete $3$-uniform $3$-partite hypergraph $K^3_{n,n,n}$.  Problems on extending partial edge colourings for ordinary graphs have been studied to some
extent, see e.g. \cite{EGHKPS, GiraoKang} and the references given there,  
but similar problems for hypergraphs remain mostly unexplored.

In Section 2 we give some definitions and preparatory lemmas,
and in Section 3 we prove Theorem \ref{maintheorem}.


\section{Definitions and properties of Boolean Latin cubes}

In this section we give some definitions and collect essential
properties of Boolean Latin cubes.

Let $A$ be an $n \times n \times n$ cube.
Given $i \in [n]$, \textit{row layer} $i$ in 
$A$ is a set of $n^2$ cells $\{(i,j^*,k^*) : j^* \in [n], k^* \in [n] \}$;
given $j \in [n]$, \textit{column layer} $j$ in 
$A$ is a set of $n^2$ cells $\{(i^*,j,k^*) : i^* \in [n],k^* \in [n]\}$;
given $k \in [n]$, \textit{file layer} $k$  in $A$
is a set of $n^2$ cells $\{(i^*,j^*,k) : i^* \in [n], j^* \in [n]\}$. 
As mentioned above, by fixing two coordinates and varying the third,
we obtain rows, columns and files of a $n \times n \times n$ cube.
Formally we define a row of such a cube $A$ as a set of cells
$R_{i,k} = \{(i,j^*,k) : j^* \in [n]\}$, a column as the set
$C_{j,k} = \{(i^*,j,k) : i^* \in [n]\}$, and files
$F_{i,j} = \{(i,j,k^*) : k^* \in [n]\}$.

\begin{definition} 
The {\em Boolean Latin square} of order 
$2^t$ is the Latin square with entries as in the 
addition table of $\mathbb{Z}^t_2$
with the elements of $\mathbb{Z}^t_2$ 
mapped to the integers $1,\dots,2^t$. 
\end{definition}

A {\em $4$-cycle} in a Latin square $L$ is a set of four cells 
$\{(i_1,j_1), (i_1,j_2), (i_2,j_1), (i_2,j_2)\}$ such that 
$L(i_1,j_1)=L(i_2,j_2)$ and $L(i_1,j_2)=L(i_2,j_1)$.
We note some important properties of Boolean Latin squares (cf. \cite{Lina}).

\begin{property}
\label{prop:cycles}
Each cell in the $n \times n$
Boolean Latin square is in $n-1$ distinct $4$-cycles. Permuting the rows, the columns
or the symbols does not affect the number of $4$-cycles that a cell is part of.
\end{property}

\begin{property}
\label{pro4cyc}
A $4$-cycle in the Boolean Latin square
is uniquely determined by two cells; that is, if $C$ is a $4$-cycle and 
$(i_1,j_1), (i_1,j_2) \in C$, then
$(i_2,j_1), (i_2,j_2) \in C$, where $i_2$ is the row such that
$L(i_1,j_1) = L(i_2,j_2)$ 
and $L(i_1,j_2)= L(i_2,j_1)$.
\end{property}

\begin{property}
\label{two4cycle}
The intersection of two $4$-cycles is either empty, or it contains $1$ or $4$ cells.
\end{property}

Given an integer $t$, let $a_i$ ($1 \leq i \leq 2^t$) be the $i$th smallest element 
of  $\mathbb{Z}^t_2$. 
(For example, with $t=2$, $a_1=00, a_2=01, a_3=10, a_4=11$.)
We define the \textit{Boolean Latin cube} similarly as the Boolean Latin square.
\begin{definition}
\label{defBc}
The {\em Boolean Latin cube} $B$ of order $n=2^t$ on the symbols $\{1,\dots,n\}$ 
is an $n\times n \times n$ Latin cube such that
$B(i,j,k)=x$ with $a_x=a_i+a_j+a_k$ (addition in $\mathbb{Z}^t_2$) for all $1\leq i,j,k\leq n$.
\end{definition}

\begin{definition}
A {\em $3$-cube} in a Latin cube $L$ is a set of eight cells 
$$\{(i_1,j_1,k_1), (i_1,j_2,k_1), (i_2,j_1,k_1), (i_2,j_2,k_1), 
(i_1,j_1,k_2), (i_1,j_2,k_2), (i_2,j_1,k_2), (i_2,j_2,k_2)\}$$
such that $$L(i_1,j_1,k_1)=L(i_2,j_2,k_1)=L(i_1,j_2,k_2)=L(i_2,j_1,k_2)$$
and $$L(i_1,j_2,k_1)= L(i_2,j_1,k_1)=L(i_1,j_1,k_2)= L(i_2,j_2,k_2).$$
\end{definition}

Note that every row, column and file layer of the Boolean Latin cube
is a Boolean Latin square.
For the Boolean Latin cube we have the following analogue of 
Property \ref{prop:cycles}.

\begin{property}
Each cell in the Boolean Latin cube of order $n$ belongs to $n-1$ $3$-cubes. 
\end{property}
\begin{proof}
Consider an arbitrary cell $(i_1,j_1,k_1)$ of the Boolean Latin cube  $B$ which belongs to 
a $4$-cycle $\mathfrak{c}_1=\{(i_1,j_1,k_1), $ $ (i_1,j_2,k_1), $ $ (i_2,j_1,k_1), $ $ (i_2,j_2,k_1)\}$ such that
$B(i_1,j_1,k_1)=B(i_2,j_2,k_1)$ and $B(i_1,j_2,k_1)= B(i_2,j_1,k_1)$. 
There are $n-1$ $4$-cycles $\mathfrak{c}_1$ 
in file layer $k_1$
containing $(i_1,j_1,k_1)$, since 
by construction, the file layers of the Boolean 
Latin cube are isomorphic to Boolean Latin squares; this also holds
for row and column layers. 

Now, by Property
\ref{two4cycle}, the two cells $(i_1,j_1,k_1)$ and $(i_2,j_1,k_1)$ 
define a unique $4$-cycle
$$\mathfrak{c}_2=\{(i_1,j_1,k_1), (i_2,j_1,k_1), (i_1,j_1,k_2), (i_2,j_1,k_2)\}$$ 
in the column layer $j_1$
such that $B(i_1,j_1,k_1)=B(i_2,j_1,k_2)$ and $B(i_2,j_1,k_1)= B(i_1,j_1,k_2)$. 
By Definition \ref{defBc}, 
$$a_{i_1}+a_{j_1}+a_{k_1}= a_{i_2}+a_{j_2}+a_{k_1} =a_{i_2}+a_{j_1}+a_{k_2}$$ 
and 
$$a_{i_1}+a_{j_2}+a_{k_1}= a_{i_2}+a_{j_1}+a_{k_1}= a_{i_1}+a_{j_1}+a_{k_2}.$$
Hence, we have 
$$a_{i_1}+a_{j_1}+a_{k_2}= a_{i_2}+a_{j_2}+a_{k_2} =a_{i_2}+a_{j_1}+a_{k_1}$$
and 
$$a_{i_1}+a_{j_2}+a_{k_2}= a_{i_2}+a_{j_1}+a_{k_2}= a_{i_1}+a_{j_1}+a_{k_1};$$
or, in other words, 
$$B(i_1,j_2,k_1)= B(i_2,j_1,k_1)=B(i_1,j_1,k_2)= B(i_2,j_2,k_2)$$
and $$B(i_1,j_1,k_1)=B(i_2,j_2,k_1)=B(i_1,j_2,k_2)=B(i_2,j_1,k_2).$$ 
This implies that
$$\{(i_1,j_1,k_1), (i_1,j_2,k_1), (i_2,j_1,k_1),  (i_2,j_2,k_1), (i_1,j_1,k_2), 
(i_1,j_2,k_2), (i_2,j_1,k_2),  (i_2,j_2,k_2)\}$$ is a $3$-cube;
and so each cell in the Boolean Latin cube belongs to 
$n-1$ $3$-cubes. 
\end{proof}

\begin{property}
\label{traversalsetproperty}
Let $(i_1,j_1,k_1)$, $(i_2,j_2,k_2)$, $(i_3,j_3,k_3)$ be three cells
in the Boolean Latin cube $B$ such that
$(i_1 -i_2)(j_1-j_2)(k_1-k_2) \neq 0$, $(i_1 -i_3)(j_1-j_3)(k_1-k_3) \neq 0$
and $(i_2 -i_3)(j_2-j_3)(k_2-k_3) \neq 0$. 
If $(i_1,j_1,k_1)$ and $(i_2,j_2,k_2)$
both are in a $3$-cube $\mathcal{C}_1$, and $(i_1,j_1,k_1)$ and $(i_3,j_3,k_3)$
are in a $3$-cube $\mathcal{C}_2$,
then $(i_2,j_2,k_2)$ and $(i_3,j_3,k_3)$ are in a $3$-cube $\mathcal{C}_3$.
\end{property}

\begin{proof}
Assume $B(i_1,j_1,k_1)=x$, $B(i_2,j_2,k_2)=y$, $B(i_3,j_3,k_3)=z$. Since 
$\mathcal{C}_1$ and $\mathcal{C}_2$ are $3$-cubes, 
we have that $B(i_2,j_2,k_1)=B(i_1,j_1,k_1)=B(i_3,j_3,k_1)$, i.e,
 $a_{i_2} + a_{j_2} + a_{k_1} = a_{i_1} + a_{j_1} + a_{k_1} = a_{i_3} + a_{j_3} + a_{k_1}$.
It follows that $a_{i_2} + a_{j_2}= a_{i_3} + a_{j_3}$, 
so $a_{i_2}+a_{j_2} + a_{k_2} = a_{i_3} +a_{j_3} + a_{k_2}$, which implies that
$B(i_3,j_3,k_2)=B(i_2,j_2,k_2)=y$.
Similarly, we have $B(i_3,j_2,k_3)=B(i_2,j_3,k_3)=B(i_2,j_2,k_2)=y$ and 
$B(i_3,j_2,k_2)=B(i_2,j_3,k_2)=B(i_2,j_2,k_3)=B(i_3,j_3,k_3)=z$, which implies that
$(i_2,j_2,k_2)$ and $(i_3,j_3,k_3)$ are two cells of a $3$-cube 
$$\mathcal{C}_3=\{(i_2,j_2,k_2), (i_2,j_3,k_2), (i_3,j_2,k_2), (i_3,j_3,k_2),
 (i_2,j_2,k_3), (i_2,j_3,k_3), (i_3,j_2,k_3),  (i_3,j_3,k_3)\}.$$
\end{proof}

\begin{property}
The intersection of two $3$-cubes in a Latin cube
is either empty, or it contains $1$ or $8$ cells.
\end{property}
\begin{proof}
Assume that the intersection of two given $3$-cubes contains at least $2$ cells. 
If these $2$ cells lie in a $4$-cycle of a layer of the Latin cube,
then by  Property \ref{two4cycle}, 
this $4$-cycle belongs to the  intersection of two $3$-cubes.
But each $4$-cycle defines a unique $3$-cube, which implies that 
the intersection of the two $3$-cubes
contains $8$ cells. If not, these $2$ cells must have distinct row, column
and file coordinates, so if we denote these two cells by
$(i_1,j_1,k_1)$ and $(i_2,j_2,k_2)$, respectively, then
$i_1 \neq i_2$, $j_1 \neq j_2$, $k_1 \neq k_2$. Hence, the intersection of 
the two $3$-cubes must be the
$8$ cells $(i_1,j_1,k_1), $ $ (i_1,j_2,k_1), $ $ (i_2,j_1,k_1), $ $ (i_2,j_2,k_1),$ $ (i_1,j_1,k_2), $ 
$(i_1,j_2,k_2), $ $ (i_2,j_1,k_2), $ $  (i_2,j_2,k_2)$.
\end{proof}

\begin{definition}
Given a $3$-cube $$\mathcal{C}=\{(i_1,j_1,k_1), (i_1,j_2,k_1), 
(i_2,j_1,k_1), (i_2,j_2,k_1), (i_1,j_1,k_2), (i_1,j_2,k_2), 
(i_2,j_1,k_2),  (i_2,j_2,k_2)\}$$ in a Latin cube $L$, 
a \textit{swap on $\mathcal{C}$} (or simply a \textit{swap})
denotes the transformation $L \rightarrow L'$ which retains the content
of all cells of $L$ except that if 
$$L(i_1,j_1,k_1)=L(i_2,j_2,k_1)=L(i_1,j_2,k_2)=L(i_2,j_1,k_2)=x_1$$
and $$L(i_1,j_2,k_1)= L(i_2,j_1,k_1)=L(i_1,j_1,k_2)= L(i_2,j_2,k_2)=x_2$$ then
$$L'(i_1,j_1,k_1)=L'(i_2,j_2,k_1)=L'(i_1,j_2,k_2)=L'(i_2,j_1,k_2)=x_2$$
and $$L'(i_1,j_2,k_1)= L'(i_2,j_1,k_1)=L'(i_1,j_1,k_2)= L(i_2,j_2,k_2)=x_1.$$
\end{definition}

\begin{property}
\label{rcfblock}
Consider an arbitrary column $\{(i_1,j_1,k_1),\dots, (i_n,j_1,k_1)\}$
of a Boolean Latin cube $B$ of order $n$. 
For any $k_2$ ($j_2$), there exists a unique $j_2$ ($k_2$),
such that $B(x,j_1,k_1)=B(x,j_2,k_2)$ for every $x \in \{1,\dots,n\}$. 
\end{property}
\begin{proof}
For any $k_2$, we can choose $j_2$ satisfying $a_{j_2}=a_{j_1}+a_{k_1}-a_{k_2}$, and
for any $j_2$, we can choose $k_2$ satisfying $a_{k_2}=a_{j_1}+a_{k_1}-a_{j_2}$.
\end{proof}
Evidently, all rows and files of a Boolean Latin cube have corresponding properties.

\begin{property}
\label{Srcfblock}
Let $B$ be a Boolean Latin cube of order $n$, 
$\mathfrak{b}$ an arbitary symbol in $B$, and
$S_1$ be the set of cells of $B$ in the first row layer which contain $\mathfrak{b}$.
For any row layer $i$, the set of cells $S_i$ of $B$ in row layer $i$ which have the same column and file coordinates as cells in $S_1$ all 
contain the same symbol. 
\end{property}
\begin{proof}
Assume $(i,j_1,k_1) \in S_i$ and $B(i,j_1,k_1)=x$, and consider an arbitrary cell $(i,j_2,k_2) \in S_i$. 
By definition, there are two cells
$(1,j_1,k_1)$ and $(1,j_2,k_2)$ such that 
$B(1,j_1,k_1) = B(1,j_2,k_2) = \mathfrak{b}$, that is,
$a_1 + a_{j_1} + a_{k_1} = a_1 + a_{j_2} + a_{k_2}$. This implies that
$a_i + a_{j_1} + a_{k_1} = a_i + a_{j_2} + a_{k_2}$, which means
that $B(i,j_2,k_2)=B(i,j_1,k_1)=x$. Hence, all cells in $S_i$ contain the same 
symbol.
\end{proof}
Note that all column and file layers of $B$ have the same property.

The following simple observation enables us to permute layers and symbols
in a Latin cube.

\begin{property}
If $L$ is a Latin cube, then
the cube obtained by permuting the row layers, the column layers, 
the file layers and/or the symbols of $L$ is a Latin cube.
\end{property}

For Boolean Latin cubes an even stronger property holds.
If a Latin cube $L'$ is obtained from another Latin cube $L$ by permuting
row/column/file layers and/or symbols of $L$, then we say
that $L$ and $L'$ are {\em isomorphic}.
Henceforth, all Latin cubes have order $n$.

\begin{property}
If $L$ is isomorphic to a Boolean Latin cube, then any cell of $L$ is in $n-1$
$3$-cubes.
Moreover,
Property \ref{traversalsetproperty}, \ref{rcfblock}, and \ref{Srcfblock}
hold for $L$. 
\end{property}

\bigskip

In the following we shall define some sets of cells in Latin cubes
that are isomorphic to Boolean Latin cubes.

\begin{definition}
Let $L$ be a Latin cube that is isomorphic to a Boolean Latin cube.
%
A \textit{row block} of $L$ is a set of $n$ rows $R_{i,k}$ such that for every pair 
of rows $R_{i_1,k_1}=\{(i_1,j,k_1): j \in [n]\}$ and 
$R_{i_2,k_2} = \{(i_2,j,k_2): j \in [n]\}$ in this set, $B(i_1,x,k_1)=B(i_2,x,k_2)$ for every $x \in \{1,\dots,n\}$. It is obvious that there are $n$ row blocks in total.
\textit{Column blocks} and  \textit{file blocks} are defined similarly.
\end{definition}

\begin{property}
\label{cellbelongsrowblock}
If 
$$\mathcal{C}=\{(i_1,j_1,k_1),  (i_1,j_2,k_1), (i_2,j_1,k_1), 
(i_2,j_2,k_1), (i_1,j_1,k_2), (i_1,j_2,k_2), (i_2,j_1,k_2), (i_2,j_2,k_2)\}$$
is a $3$-cube
in a Latin cube $L$ that is isomorphic to a Boolean cube,
then the two rows $R_{i_1,k_1}$ and $R_{i_2,k_2}$ are in the same row block,
as are also the two rows $R_{i_2, k_1}$ and $R_{i_1, k_2}$. 
\end{property}
Note that a  similar property holds for columns blocks and file blocks.

\begin{definition}
If $L$ is a Latin cube that is isomorphic to a 
Boolean Latin cube, a \textit{transversal-set} $\mathfrak{t}$ of $L$ 
is a set of $n$ cells that satisfy the following

\begin{itemize}

	\item no two cells in $\mathfrak{t}$ are in the same row/column/file;
	
	 \item no two cells in $\mathfrak{t}$ contain the same symbol;
	
	\item for any two cells in $\mathfrak{t}$, there is a unique $3$-cube
	that contain these cells.

\end{itemize}
\end{definition}
Note that by Property \ref{traversalsetproperty}, a transversal-set is well-defined,
and every row block, column block and file block contains exactly $n$ disjoint 
transversal-sets.

Based on Property \ref{Srcfblock}, we make the following definition.
\begin{definition}
A \textit{symbol-row block} of a Latin cube $L$ that is isomorphic to a Boolean
Latin cube is a set $\mathfrak{s}$  of $n^2$ cells 
satisfying that
\begin{itemize}
	
	\item all cells of $\mathfrak{s}$ that are in the same row layer
	contain the same symbol, and
	
	\item for every cell of $\mathfrak{s}$, there are $n-1$ other cells that have
	the same column and file coordinate.

\end{itemize}
\textit{Symbol-column blocks} and \textit{symbol-file blocks} are defined similarly.
\end{definition}

An intersection between a symbol-row block and a row layer (or a symbol-column block and a column layer, or a symbol-file block and a file layer) is called a \textit{symbol-set}. It is obvious that all cells in a symbol-set contain the same symbol, and that
each row layer, column layer, file layer, symbol-row block, symbol-column block, 
and symbol-file block contains $n$ symbol-sets.

\begin{definition}
A \textit{symbol block} of a Latin cube $L$ is a set of $n^2$ cells such that all these cells contain the same symbol. 
\end{definition}
Note that a Latin cube that is isomorphic to a Boolean Latin cube 
contains $n$ symbol blocks in total, and 
for each symbol block, there are three different ways to divide this 
symbol block to $n$ disjoint symbol-sets 
(group the symbol sets based on the row layers, the column layers or the file layers).

\bigskip

Given an $n\times n \times n$ cube $A$ where each cell contains a subset of the
symbols in $\{1,\dots,n\}$, and a Latin cube $L$ of order $n$
that does not avoid $A$, we say that those cells $(i,j,k)$ of $L$ where 
$L(i,j,k) \in A(i,j,k)$ are \textit{conflict cells of $L$ with $A$} 
(or simply {\em conflicts} of $L$).
An \textit{allowed $3$-cube} of $L$ is a $3$-cube 
$$\mathcal{C}=\{(i_1,j_1,k_1),  (i_1,j_2,k_1), (i_2,j_1,k_1), (i_2,j_2,k_1), 
(i_1,j_1,k_2),  
(i_1,j_2,k_2),  (i_2,j_1,k_2),   (i_2,j_2,k_2)\}$$ in $L$ such that swapping on $\mathcal{C}$ yields a Latin cube
$L'$ where none of $(i_1,j_1,k_1), $ $ (i_1,j_2,k_1), $ $ (i_2,j_1,k_1), $ $ (i_2,j_2,k_1),$ $ (i_1,j_1,k_2), $ 
$(i_1,j_2,k_2), $ $ (i_2,j_1,k_2), $ $  (i_2,j_2,k_2)$ is a conflict.

\section{Proof of the main theorem}



In this section we prove Theorem \ref{maintheorem}. 
Our basic proof strategy is similar to the one in 
\cite{Lina,AndrenCasselgrenOhman}; however, due to the extra dimension
in a Latin cube, our arguments are considerably more involved and
somewhat technical.
Our starting point in the proof is the Boolean Latin cube; we 
permute its row layers, column layers, file layers and symbols 
so that the resulting Latin cube does not have 
too many conflicts with a given $(m,m,m,m)$-cube $A$. After that, we find a set of allowed $3$-cubes such that each conflict belongs to one of them, with no two of the $3$-cubes intersecting, and swap on those $3$-cubes.

The proof of Theorem \ref{maintheorem} involves a number of parameters:
$$\alpha, \gamma, \kappa, \epsilon, \theta,$$
and a number of inequalities that they must satisfy. For the reader's convenience,
explicit choices for which the proof holds are presented here:
$$\alpha = 1-38 \times 2^{-25}, \gamma=2^{-25}, \kappa= 6 \times 2^{-25}, \epsilon = 2^{-6}, \theta=2^{-12}.$$

By an example of an unavoidable 
$(\lfloor {\frac{n}{3}} \rfloor+1, 
\lfloor {\frac{n}{3}} \rfloor+1, \lfloor {\frac{n}{3}} \rfloor+1)$-arrays
in \cite{CulterOhman}, 
the value of $\gamma$ for which Theorem \ref{maintheorem}
holds cannot exceed $\frac{1}{3}$.
Thus, since the numerical value of $\gamma$ for which the theorem holds
is not anywhere near
what we expect to be optimal, we have not put an effort into choosing optimal values
for these parameters. 
Moreover, for simplicity of notation,
we shall omit floor and ceiling signs whenever these are not crucial.

We shall establish that our main theorem holds by
proving two lemmas.

\begin{lemma}
Let $\alpha, \gamma, \kappa$ be constants and $n=2^t$ such that  

$$ \Big(7n^2 \dfrac{(\gamma n)^{\kappa n}}{(\kappa n)!} +3n^3\dfrac{{(2\gamma n)^{(1-\alpha-2\gamma)n/3}}}{((1-\alpha-2\gamma)n/3)!}\Big) < 1.$$
For any $(\gamma n, \gamma n, \gamma n, \gamma n)$-cube $A$ of order $n$ there is a quadruple of permutations $\sigma=(\tau_1, \tau_2, \tau_3, \tau_4)$
of the row layers, the column layers, the file layers and the symbols of the Boolean Latin cube $B$ of order $n$,
respectively, 
such that applying $\sigma$ to $B$, we obtain a Latin cube $L$ satisfying
the following:
\begin{itemize}
\item[(a)] No row in $L$ contains more than $\kappa n$ conflicts with $A$.
\item[(b)] No column in $L$ contains more than $\kappa n$ conflicts with $A$.
\item[(c)] No file in $L$ contains more than $\kappa n$ conflicts with $A$.
\item[(d)] No symbol-set in $L$ contains more than $\kappa n$ conflicts with $A$.
\item[(e)] No transversal-set in $L$ contains more than $\kappa n$ conflicts with $A$.
\item[(f)] Each cell of $L$ belongs to at least $\alpha n$ allowed 3-cubes.
\end{itemize}
\end{lemma}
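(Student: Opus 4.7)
The plan is to sample $\sigma=(\tau_1,\tau_2,\tau_3,\tau_4)$ uniformly and independently from $S_n^4$, apply it to the Boolean Latin cube $B$ to produce $L$, and show via a union bound that with positive probability $L$ satisfies (a)--(f) simultaneously; the two terms in the hypothesised inequality will correspond respectively to the failure probabilities for (a)--(e) and for (f).

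For each of (a)--(e) the bad event is that some $n$-cell structure $S$ (a row, column, file, symbol-set, or transversal-set) contains more than $\kappa n$ conflicts with $A$. Since $B$ contains each of the $n$ symbols exactly once on every such $S$, and $\tau_4$ is a uniform random permutation, the tuple of symbols on $S$ in $L$ is uniformly distributed over the permutations of $[n]$. Using $|A(c)|\leq\gamma n$ at every cell $c$ together with the standard sampling-without-replacement estimate,
$$\Pr\bigl[S\text{ contains at least }\kappa n\text{ conflicts}\bigr]\leq\binom{n}{\kappa n}\cdot\frac{(\gamma n)^{\kappa n}(n-\kappa n)!}{n!}=\frac{(\gamma n)^{\kappa n}}{(\kappa n)!}.$$
Counting at most $n^2$ rows, $n^2$ columns, $n^2$ files, $3n^2$ symbol-sets and $n^2$ transversal-sets (totalling at most $7n^2$ structures) yields the first term.

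For (f) I fix $c=(i_1,j_1,k_1)$ and parametrise the $n-1$ 3-cubes through $c$ by $u\in\mathbb{Z}_2^t\setminus\{0\}$, noting that the corresponding swap exchanges $x_1=L(c)$ with $x_2=\tau_4(\alpha(c)+u)$ at eight cells whose positions are algebraic functions of $u$. The $u$-th 3-cube is disallowed iff at least one of eight ``post-swap symbol in $A$'' conditions holds. I will partition the eight conditions into a deterministic group---those located at $c$ and at its row/column/file-neighbors, each admitting a deterministic upper bound of $\gamma n$ on the number of bad $u$'s via properties (a)--(d) of $A$ (for instance, $x_1\in A(c_2)$ for the row-neighbor $c_2$ contributes at most $\gamma n$ bad $u$'s, because the row of $c$ contains at most $\gamma n$ cells forbidding $x_1$)---which collapses to a cap of at most $2\gamma n$ disallowed 3-cubes after exploiting overlaps between these four conditions, and a random group consisting of the ``moving'' conditions, further partitioned into three directional subgroups, each of expected size $\leq 2\gamma n$ over the random $\sigma$. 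For each subgroup I will invoke a factorial-moment bound $\Pr[Z\geq t]\leq E\bigl[\binom{Z}{t}\bigr]\leq (2\gamma n)^t/t!$ with $t=(1-\alpha-2\gamma)n/3$, obtaining
$$\Pr\!\left[\text{subgroup count}>\tfrac{(1-\alpha-2\gamma)n}{3}\right]\leq\frac{(2\gamma n)^{(1-\alpha-2\gamma)n/3}}{((1-\alpha-2\gamma)n/3)!}.$$
Adding the $2\gamma n$ deterministic cap to three such random contributions bounds the number of disallowed 3-cubes through $c$ by $(1-\alpha)n$, so at least $\alpha n$ are allowed; union-bounding over $n^3$ cells and 3 subgroups per cell produces the second term. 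The main obstacle will be organizing the bookkeeping so that the deterministic count is actually $\leq 2\gamma n$ and each directional random subgroup genuinely has expected size $\leq 2\gamma n$; this requires exploiting the algebraic structure of the Boolean cube to track how, as $u$ varies, the ``distance-$2$'' and body-diagonal cells of the 3-cube sweep out sets on which $A$'s row/column/file densities can be aggregated, and then bounding each summand of the factorial moment by a product of $2\gamma$'s via those constraints.
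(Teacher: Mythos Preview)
Your overall architecture---uniform random $\sigma$, union bound, with (a)--(e) contributing the first term and (f) the second---is exactly the paper's approach (phrased as counting rather than probability).  There are, however, two concrete problems with the proposal as written.

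\textbf{The argument for (d) is wrong.}  You assert that ``$B$ contains each of the $n$ symbols exactly once on every such $S$'' for all five structure types, and then randomize over $\tau_4$.  This is false for symbol-sets: by definition every cell of a symbol-set carries the \emph{same} symbol, so permuting symbols via $\tau_4$ does nothing useful---all $n$ cells still share one common entry, and the cell-size bound $|A(c)|\le\gamma n$ gives you no control over how many of them are conflicts.  The paper instead fixes $(\tau_1,\tau_3,\tau_4)$ and randomizes over $\tau_2$: the $n$ cells of a symbol-set in a fixed row layer lie in $n$ distinct column layers, and one uses the hypothesis that each symbol occurs at most $\gamma n$ times in each \emph{row} of $A$ (not the cell-size bound) to get the same $(\gamma n)^{\kappa n}/(\kappa n)!$ estimate.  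Your count of $3n^2$ symbol-sets is correct, but you must change the source of randomness and the property of $A$ invoked.

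\textbf{The grouping in (f) needs adjustment.}  You put the four cells $c$, row-neighbour, column-neighbour, file-neighbour into a ``deterministic'' group and claim this collapses to a $2\gamma n$ cap via overlaps.  There is no such collapse: those are four genuinely different constraints, each contributing up to $\gamma n$ bad $u$'s.  The paper's decomposition is by the four \emph{rows} of the $3$-cube: the fixed row $R_{i_1,k_1}$ contains exactly two cells (namely $c$ and its row-neighbour), and those two give the $2\gamma n$ deterministic cap.  The remaining six cells sit on the three rows $R_{i_2,k_1}$, $R_{i_1,k_2}$, $R_{i_2,k_2}$, two cells per row; these are your three ``directional subgroups'', and the randomness comes from $\tau_1$, $\tau_3$, and $(\tau_1,\tau_3)$ respectively (with $\tau_2,\tau_4$ held fixed).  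With this partition your factorial-moment bound with threshold $(1-\alpha-2\gamma)n/3$ goes through exactly as in the paper; with your $4$--$4$ split it does not match the stated inequality and the remaining four cells do not divide naturally into three groups.
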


\begin{proof}
Let $X_a$, $X_b$, $X_c$, $X_d$, $X_e$ and $X_f$ be the number of permutations which do not fulfill the conditions
$(a)$, $(b)$, $(c)$, $(d)$, $(e)$ and $(f)$, respectively. Let $X$ be the number of permutations satisfying the 
five conditions $(a)$, $(b)$, $(c)$, $(d)$, $(e)$ and $(f)$. There are $(n!)^4$ ways to permute the row layers,
the column layers, the file layers and the symbols, so we have
$$X \geq (n!)^4 - X_a - X_b - X_c - X_d - X_e - X_f.$$
We shall prove that $X$ is greater than $0$.

\begin{itemize}
\item To estimate $X_a$, assume that for any fixed permutation 
$(\tau_1,\tau_3,\tau_4)$ of the row layers,
the file layers and the symbols, 
at most $N_a$ choices of a permutation $\tau_2$ of the column layers yield a quadruple
$(\tau_1, \tau_2, \tau_3,\tau_4)$ of permutations 
that break condition $(a)$; so $X_a \leq n!n!n!N_a$.

Let $R$ be a fixed row chosen arbitrarily; 
we count the number of ways a permutation $\tau_2$ of the column layers 
can be constructed so that $(a)$ does not hold on row $R$.
Let $S$ be a set of size $\kappa n$ of column layers of $A$.
There are $n \choose \kappa n$ ways to choose $S$. 
In order to have a conflict at cell $(i,j,k)$ of $R$, the column layers 
should be permuted
in such a way that in the resulting Latin cube $L$, $L(i,j,k) \in A(i,j,k)$. 
Since $|A(i,j,k)| \leq \gamma n$, there are at most $(\gamma n)^{\kappa n}$ 
ways to choose which column 
layers of $B$ are mapped by $\tau_2$ to column layers in $S$ so that 
all cells on row $R$ that are in $S$ are conflicts. The rest of the column 
layers can be arranged in any of
the $(n-\kappa n)!$ possible ways. In total this gives at most
$${n \choose \kappa n}(\gamma n)^{\kappa n}(n-\kappa n)! 
= \dfrac{n!(\gamma n)^{\kappa n}}{(\kappa n)!}$$
permutations $\tau_2$ that do not satisfy condition $(a)$ on row $R$.
There are $n^2$ rows in $B$, so we have 
$$N_a \leq n^2 \dfrac{n!(\gamma n)^{\kappa n}}{(\kappa n)!}$$
and $$X_a \leq n!n!n!N_a \leq n^2(n!)^4\dfrac{(\gamma n)^{\kappa n}}{(\kappa n)!}.$$

An analogous argument gives the same bound for $X_b$, $X_c$, so in total, we have that
$$X_a + X_b + X_c  \leq 3n^2(n!)^4\dfrac{(\gamma n)^{\kappa n}}{(\kappa n)!}.$$

\item To estimate $X_d$, assume that for any fixed permutation $(\tau_1,\tau_3, \tau_4)$ of the row layers, the file layers
and the symbols, at most $N_d$ choices of a permutation $\tau_2$ of the column layers give a quadruple 
$(\tau_1, \tau_2, \tau_3,\tau_4)$ of permutations 
that break condition $(d)$; then $X_d \leq n!n!n!N_d$.

Let $\mathfrak{b}$ be a fixed symbol chosen arbitrarily;
we count the number of ways a permutation $\tau_2$ of the column layers 
can be constructed so that $(d)$ does not hold for $\mathfrak{b}$
in a given row layer. 
Let $R_L$ be a fixed row layer; there are $n$ cells containing $\mathfrak{b}$ in $R_L$
and these cells belong to $n$ different column layers since $B$ is a boolean
Latin cube.
Let $S$ be a set of size $\kappa n$ of column layers of $A$; 
there are $n \choose \kappa n$ ways to choose $S$. 
Since in $A$, each symbol occurs at most $\gamma n$ time in each row, 
there are at most $(\gamma n)^{\kappa n}$ ways to choose which 
column layers of $B$ are mapped by $\tau_2$ to column layers in $S$ so that 
all cells containing $\mathfrak{b}$ on row layer $R_L$ that are
in $S$ are conflicts.
The rest of the column layers can be arranged in any of
the $(n-\kappa n)!$ possible ways. In total this gives at most
$${n \choose \kappa n}(\gamma n)^{\kappa n}(n-\kappa n)! 
= \dfrac{n!(\gamma n)^{\kappa n}}{(\kappa n)!}$$
permutations $\tau_2$ such that in the resulting Latin cube $L$, symbol $\mathfrak{b}$
appears in more than $\kappa n$ conflicts in the row layer $R_L$.
There are $n$ different row layers, $n$ different column layers and $n$ different file layers in $B$,
so we deduce that there are at most $3n\dfrac{n!(\gamma n)^{\kappa n}}{(\kappa n)!}$
permutations $\tau_2$ that do not satisfy condition $(d)$ on symbol $\mathfrak{b}$.
There are $n$ symbols in $B$, so we have
$$N_d \leq 3n^2 \dfrac{n!(\gamma n)^{\kappa n}}{(\kappa n)!}.$$
and
$$X_d \leq 3n^2 (n!)^4\dfrac{(\gamma n)^{\kappa n}}{(\kappa n)!}.$$

\item To estimate $X_e$, assume that for any fixed permutation $(\tau_1,\tau_2,\tau_3)$ of the row layers, the column layers,
the file layers, 
at most $N_e$ choices of a permutation $\tau_4$ of the symbols give a quadruple
$(\tau_1, \tau_2, \tau_3,\tau_4)$ of permutations 
that break condition $(e)$; so $X_e \leq n!n!n!N_e$.

Let $T$ be a fixed transversal-set chosen arbitrarily; 
we count the number of ways a permutation $\tau_4$ of the symbols
can be constructed so that $(e)$ does not hold on the set $T$.
Let $S$ be a set of size $\kappa n$ of cells of $T$;
there are $n \choose \kappa n$ ways to choose $S$. 
In order to have a conflict at cell $(i,j,k)$ of $T$, the symbols should be permuted
in such a way that in the resulting Latin cube $L$, $L(i,j,k) \in A(i,j,k)$. 
Since $|A(i,j,k)| \leq \gamma n$, there are at most $(\gamma n)^{\kappa n}$ 
ways to choose which symbols of $B$ are mapped by $\tau_4$ to cells in $S$ so that 
all cells in $S$ are conflicts. The rest of the symbols can be arranged in any of
the $(n-\kappa n)!$ possible ways. In total this gives at most
$${n \choose \kappa n}(\gamma n)^{\kappa n}(n-\kappa n)! 
= \dfrac{n!(\gamma n)^{\kappa n}}{(\kappa n)!}$$
permutations $\tau_4$ that do not satisfy condition $(e)$ on the transversal-set $T$.
There are $n^2$ transversal-sets in $B$, so we have 
$$N_e \leq n^2 \dfrac{n!(\gamma n)^{\kappa n}}{(\kappa n)!},$$
and so
$$X_e \leq n!n!n!N_e \leq n^2(n!)^4\dfrac{(\gamma n)^{\kappa n}}{(\kappa n)!}.$$

\item To estimate $X_f$, assume that for any fixed permutation $(\tau_2,\tau_4)$ of the  
column layers and the symbols at most $N_f$ choices of a pair $(\tau_1, \tau_3)$ 
of the row layers and the file layers yield a quadruple
$(\tau_1, \tau_2, \tau_3, \tau_4)$ of permutations 
that break condition $(f)$, then $X_f \leq n!n!N_f$.

Let $(i_1,j_1,k_1)$ be an arbitrary fixed cell of $A$. 
There are $n^2$ ways to choose a row layer $i_x$ 
and a file layer $k_x$ so that $i_1=\tau_1(i_x)$ and $k_1=\tau_3(k_x)$;
we fix such a row layer $i_x$ and file layer $k_x$.
Moreover, each $3$-cube $\mathcal{C}$ containing 
$(i_1,j_1,k_1)$ is uniquely determined by the value of $j_2 \neq j_1$ 
where $(i_1,j_2,k_1) \in \mathcal{C}$;
so a pair of permutations $(\tau_1, \tau_3)$ satisfy that
the quadruple $(\tau_1, \tau_2, \tau_3, \tau_4)$ adds to $X_f$
if and only if there are more than $(1-\alpha)n$ choices for $j_2$ so that the swap along $\mathcal{C}$
is not allowed. We shall count the number of ways of choosing $(\tau_1, \tau_3)$
so that this holds.

Let us first note that there are at most $2 \gamma n$ choices for $(i_x, k_x)$ that yield
a $3$-cube $\mathcal{C}$ that is not allowed because of a conflict on row $i_1$
in file layer $k_1$;
that is, after swapping on $\mathcal{C}$, we have a conflict cell on row $i_1$
in file layer $k_1$.
This follows from the fact that there are $\gamma n$ choices for 
$j_2$ such that $A(i_1,j_2,k_1)$ contains $L(i_1,j_1,k_1)$,
and since $|A(i_1,j_1,k_1)| \leq \gamma n$, we
have $\gamma n$ choices for $j_2$ so that $L(i_1, j_2, k_1) \in A(i_1,j_1,k_1)$.
So for a permutation $(\tau_1,\tau_3)$ to contribute to $N_f$, 
$(\tau_1,\tau_3)$ must be such that at least
$(1-\alpha -2\gamma)n$ $3$-cubes containing the cell $(i_1,j_1,k_1)$ are not allowed
because of restrictions on
rows of $A$ that are distinct from row $i_1$ in file layer $k_1$.
Since each $3$-cube $\mathcal{C}$ containing $(i_1,j_1,k_1)$ has cells from three
other rows, this implies that at least
$(1-\alpha-2\gamma)n / 3$  $3$-cubes cannot be allowed because of conflicts appearing in 
one of these rows.
 
Let $N_{f_1}$ be the number of pairs of permutations $(\tau_1,\tau_3)$ such that
at least $(1-\alpha-2\gamma)n / 3$  $3$-cubes containing $(i_1,j_1,k_1)$ 
cannot be allowed 
because swapping yields conflicts in cells in 
file layer $k_1$ that are not contained in row layer $i_1$.
Let us first note that 
there are $(n-1)!$ ways to permute the remaining file layers of $B$.
Consider a fixed permutation $\tau_3$ of the file layers; 
we count the number of permutations
$\tau_1$ of the row layers such that the pair $(\tau_1,\tau_3)$ 
contributes to $N_{f1}$. 
Let $S$ be a set of columns, ($|S|=(1-\alpha-2 \gamma)n/3$),
such that for every column $C_{j_2,k_2} \in S$, there is a unique $i_2$ satisfying
that
$$\mathcal{C}=\{(i_1,j_1,k_1), (i_1,j_2,k_1), (i_2,j_1,k_1), (i_2,j_2,k_1), 
(i_1,j_1,k_2), (i_1,j_2,k_2), (i_2,j_1,k_2), (i_2,j_2,k_2)\}$$ is a $3$-cube 
and this $3$-cube is not allowed because of conflicts arising in row $i_2$ in
file layer $k_1$. 
There are $n -1 \choose (1-\alpha-2\gamma)n / 3$ ways to choose $S$. 
Fix a column $C_{j_2,k_2} \in S$; in column $j_1$ of file layer $k_1$ 
of $A$, there are at most $\gamma n$ cells
containing $L(i_1,j_1,k_1)$ and in the column $j_2$ in file layer $k_1$ of $A$, 
there are at most $\gamma n$ 
cells containing $L(i_1,j_2,k_1)$, so there are up to $2\gamma n$ choices
for $\tau^{-1}_1(i_2)$ in $B$ that would make $\mathcal{C}$ disallowed because of 
conflicts arising in rows distinct from $i_1$ in the file layer $k_1$.

Since every column in $S$ yields a unique row index, $S$
determines $\tau_1$ on $(1-\alpha-2 \gamma)n/3$ row layers.
The remaining row layers can be permuted in $(n-1-(1-\alpha-2\gamma)n / 3)!$ ways. 
This implies that the total number of permutations $\tau_1$
that yield at least $(1-\alpha-2\gamma)n / 3$  $3$-cubes that are not 
allowed because of conflicts appearing in file layer 
$k_1$ that are not contained in row layer $i_1$
is bounded from above  by
$${{n-1} \choose {(1 - \alpha - 2\gamma)n/3}} {(2\gamma n)^{(1-\alpha-2\gamma)n/3}} {(n-1-(1-\alpha-2\gamma)n / 3)!}
= \dfrac{(n-1)! {(2\gamma n)^{(1-\alpha-2\gamma)n/3}}}{((1-\alpha-2\gamma)n/3)!}$$
Hence, $N_{f_1} \leq (n-1)! \dfrac{(n-1)! {(2\gamma n)^{(1-\alpha-2\gamma)n/3}}}{((1-\alpha-2\gamma)n/3)!}$.

Let $N_{f_2}$ be the number of pairs of permutations $(\tau_1,\tau_3)$ such that
at least $(1-\alpha-2\gamma)n / 3$  $3$-cubes
containing $(i_1,j_1,k_1)$ are not allowed because
swapping on them yields conflicts in 
rows contained in the row layer $i_1$ but not in file layer $k_1$. 
There are $(n-1)!$ ways to permute the remaining row layers of $B$.
We consider a fixed permutation $\tau_1$ of the row layers and count the number of permutations $\tau_3$
of the file layers such that the pair $(\tau_1,\tau_3)$ 
contributes to $N_{f_2}$. 
Let $S$ be a set of files, ($|S|=(1-\alpha-2 \gamma)n/3$), 
such that for every file $F_{i_2,j_2} \in S$, there is a unique $k_2$ satisfying that
$$\mathcal{C}=\{(i_1,j_1,k_1), (i_1,j_2,k_1), (i_2,j_1,k_1), (i_2,j_2,k_1),
(i_1,j_1,k_2), (i_1,j_2,k_2), (i_2,j_1,k_2), (i_2,j_2,k_2)\}$$ 
is a $3$-cube and this $3$-cube
is not allowed because of conflicts arising in cells in row layer $i_1$
that are not in file layer $k_1$. 
There are $n -1 \choose (1-\alpha-2\gamma)n / 3$ ways to choose $S$. 
Fix a file $F_{i_2,j_2} \in S$, in the file $F_{i_1,j_1}$ of $A$, 
there are at most $\gamma n$ cells
containing $L(i_1,j_1,k_1)$ and in the file $F_{i_1,j_2}$ of $A$, 
there are at most $\gamma n$
cells containing $L(i_1,j_2,k_1)$, so there are up to $2\gamma n$ choices
for $\tau^{-1}_3(k_2)$ in $B$ that would make $\mathcal{C}$ disallowed because of 
possible conflicts in row layer $i_1$ that are not in file layer $k_1$.

As before, $S$ determines how $\tau_3$ acts on 
$(1-\alpha-2 \gamma)n/3)$ file layers, and
the remaining file layers can be permuted in $(n-1-(1-\alpha-2\gamma)n / 3)!$ ways. 
This implies that the total number of permutations $\tau_3$ with not 
enough allowed $3$-cubes due to the fact that 
swapping yield conflicts in 
rows contained in the row layer $i_1$ but not in file layer $k_1$ is bounded
from above by 
$${{n-1} \choose {(1 - \alpha - 2\gamma)n/3}} {(2\gamma n)^{(1-\alpha-2\gamma)n/3}} {(n-1-(1-\alpha-2\gamma)n / 3)!}
= \dfrac{(n-1)! {(2\gamma n)^{(1-\alpha-2\gamma)n/3}}}{((1-\alpha-2\gamma)n/3)!}$$
Hence, $N_{f_2} \leq (n-1)! \dfrac{(n-1)! {(2\gamma n)^{(1-\alpha-2\gamma)n/3}}}{((1-\alpha-2\gamma)n/3)!}$.

Let $N_{f_3}$ be the number of pairs of permutations $(\tau_1,\tau_3)$ such that
at least $(1-\alpha-2\gamma)n / 3$  $3$-cubes $\mathcal{C}$ 
containing $(i_1,j_1,k_1)$ are not allowed 
because swapping on them yields conflicts in cells which lie in 
row and file layers distinct from $i_1$ and $k_1$, respectively.
There are $(n-1)!$ ways to permute the remaining file layers of $B$.
Consider a fixed permutation $\tau_3$ of the file layers; we count the number of permutations $\tau_1$
of the row layers such that the pair $(\tau_1,\tau_3)$ contributes to $N_{f_3}$. 
Let $S$ be a set of columns ($|S|=(1-\alpha-2 \gamma)n/3$),
such that for every column $C_{j_2,k_2} \in S$, there is a unique $i_2$ 
satisfying that
$$\mathcal{C}=\{(i_1,j_1,k_1), (i_1,j_2,k_1), (i_2,j_1,k_1), (i_2,j_2,k_1), 
(i_1,j_1,k_2), (i_1,j_2,k_2), (i_2,j_1,k_2), (i_2,j_2,k_2)\}$$ 
is a $3$-cube which
is not allowed because of swapping yields conflicts in 
cells in row $i_2$ in file layer $k_2$.
There are $n -1 \choose (1-\alpha-2\gamma)n / 3$ ways to choose $S$. 
Fix a column $C_{j_2,k_2} \in S$; in the column $C_{j_2,k_2}$ of $A$, 
there are at most $\gamma n$ cells
containing symbol $L(i_1,j_1,k_1)$, and in the column 
$C_{j_1,k_2}$ of $A$, there are 
at most $\gamma n$ 
cells containing $L(i_1,j_2,k_1)$; so there are up to $2\gamma n$ choices
for $\tau^{-1}_1(i_2)$ in $B$ that 
would make $\mathcal{C}$ disallowed because swapping
yields conflicts in cells which lie in 
row and file layers distinct from $i_1$ and $k_1$, respectively.

The set $S$ determines how $\tau_1$ acts on $(1-\alpha-2 \gamma)n/3$ row layers.
The remaining row layers can be permuted in 
$(n-1-(1-\alpha-2\gamma)n / 3)!$ ways. 
This implies that the total number of permutations 
$\tau_1$ with too few allowed $3$-cubes because of conflicts arising
in cells in 
row and file layers distinct from $i_1$ and $k_1$ is bounded from above by
$${{n-1} \choose {(1 - \alpha - 2\gamma)n/3}} {(2\gamma n)^{(1-\alpha-2\gamma)n/3}} {(n-1-(1-\alpha-2\gamma)n / 3)!}
= \dfrac{(n-1)! {(2\gamma n)^{(1-\alpha-2\gamma)n/3}}}{((1-\alpha-2\gamma)n/3)!}$$
Hence, $N_{f_3} \leq (n-1)! \dfrac{(n-1)! {(2\gamma n)^{(1-\alpha-2\gamma)n/3}}}{((1-\alpha-2\gamma)n/3)!}$.

The Boolean Latin cube contains $n^3$ cells in total, so
$$N_f \leq n^3(n^2 N_{f_1} + n^2 N_{f_2} + n^2 N_{f_3}) 
\leq 3n^3 (n!)^2 \dfrac{{(2\gamma n)^{(1-\alpha-2\gamma)n/3}}}{((1-\alpha-2\gamma)n/3)!}$$
and 
$$X_f \leq (n!)^2N_f
\leq 3n^3 (n!)^4 \dfrac{{(2\gamma n)^{(1-\alpha-2\gamma)n/3}}}{((1-\alpha-2\gamma)n/3)!}$$
\end{itemize}
Summing up, we conclude that
$$X \geq (n!)^4 - 7n^2 (n!)^4\dfrac{(\gamma n)^{\kappa n}}{(\kappa n)!} - 3n^3 (n!)^4 \dfrac{{(2\gamma n)^{(1-\alpha-2\gamma)n/3}}}{((1-\alpha-2\gamma)n/3)!}$$
$$\geq (n!)^4 \Big(1 - 7n^2 \dfrac{(\gamma n)^{\kappa n}}{(\kappa n)!} - 3n^3\dfrac{{(2\gamma n)^{(1-\alpha-2\gamma)n/3}}}{((1-\alpha-2\gamma)n/3)!}\Big)$$
By assumption, $X$ is strictly greater than $0$.
\end{proof}

\begin{lemma}
Let $L$ be a Latin cube that is isomorphic to a Boolean Latin cube,
and let $A$ be an $(m,m,m,m)$-cube; both of order $n$.
Furthermore, let $\alpha, \gamma, \kappa, \theta, \epsilon$ be 
constants,  $n=2^t$ such that $\epsilon n \geq 3$ and 

$$\alpha n-21\kappa n -7\epsilon n-\dfrac{84\kappa}{\epsilon}n-\dfrac{21\theta}{\epsilon} n-\dfrac{80\kappa}{\theta} n-28>0$$

If $L$ has the following properties:
\begin{itemize}
\item[(a)] no row in $L$ contains more than $\kappa n$ conflicts with $A$;
\item[(b)] no column in $L$ contains more than $\kappa n$ conflicts with $A$;
\item[(c)] no file in $L$ contains more than $\kappa n$ conflicts with $A$;
\item[(d)] no symbol-set in $L$ contains more than $\kappa n$ conflicts with $A$;
\item[(e)] no transversal-set in $L$ contains more than $\kappa n$ conflicts 
with $A$;
\item[(f)] each cell of $L$ belongs to at least $\alpha n$ allowed 3-cubes;
\end{itemize}
then there is a set of disjoint allowed $3$-cubes such that each conflict 
of $L$ belongs to one of them. Thus, by performing a number 
of swaps on $3$-cubes in $L$, 
we obtain a Latin cube $L'$ that avoids $A$.
\end{lemma}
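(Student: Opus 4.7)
The plan is to resolve all conflicts of $L$ with $A$ by a simultaneous swap on a pairwise disjoint collection of allowed $3$-cubes, one covering each conflict. Because the $3$-cubes are disjoint, the swaps do not interfere with each other; because each is allowed, no new conflict is created in its eight cells. Hence the resulting cube $L'$ is a Latin cube avoiding $A$. I would build this collection greedily: enumerate the conflicts of $L$ in some order and, for each unresolved conflict $c$, pick an allowed $3$-cube through $c$ that is disjoint from all previously chosen $3$-cubes. Any such $3$-cube may happen to cover additional conflicts, which are then simply skipped when their turn arrives.

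The crux is a counting argument showing that such a choice is always possible. Fix the current conflict $c=(i_1,j_1,k_1)$. By hypothesis~(f), $c$ lies in at least $\alpha n$ allowed $3$-cubes, each parameterized by a nonzero shift $\delta\in\mathbb{Z}_2^t$ that determines the opposite corner and the seven other cells. By Property~\ref{traversalsetproperty}, any cell $u\neq c$ lies in at most one $3$-cube through $c$, so each cell of a previously chosen $3$-cube can block at most one $3$-cube through $c$. It therefore suffices to prove that fewer than $\alpha n$ of these allowed $3$-cubes are blocked. I would bound the blocked count by partitioning a previously chosen $3$-cube $\mathcal{D}$ according to how its ``dangerous'' cells (those actually lying in some $3$-cube through $c$) sit relative to $c$: sharing two coordinates with $c$ (so lying on one of the three lines through $c$), sharing exactly one coordinate (lying on one of the row/column/file layers through $c$, with a $\mathbb{Z}_2^t$ matching condition), or sharing none. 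Conditions~(a)--(c) directly bound the contributions from rows, columns and files through $c$; (d) and (e) bound the contributions mediated by the symbol-sets and transversal-sets incident to $c$; and the auxiliary parameters $\epsilon$ and $\theta$ implement a two-level Markov-type split controlling the fraction of layers (respectively symbol-sets) that have already been heavily used by earlier choices. Each of the terms $21\kappa n$, $7\epsilon n$, $84\kappa n/\epsilon$, $21\theta n/\epsilon$ and $80\kappa n/\theta$ in the hypothesis arises from one such category, while the additive $28$ absorbs small discrete losses (rounding, cells sharing $c$'s own lines, and the few degenerate shifts).

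The main obstacle will be the meticulous case analysis in this counting step, particularly for dangerous cells that share only one or none of the coordinates of $c$: a $3$-cube $\mathcal{D}$ blocking via such a cell may have been chosen for a conflict arbitrarily far from $c$, and cannot be bounded directly by (a)--(e). This is precisely where the $\epsilon,\theta$ refinement enters: only an $\epsilon$-fraction of the relevant layers can have been touched by more than $(\kappa/\epsilon)n$ chosen $3$-cubes, and the double-split with $\theta$ handles the analogous diagonal estimate for cells in ``general position'' with respect to $c$. Once the blocked count is shown to be at most $\alpha n-1-21\kappa n-7\epsilon n-(84\kappa/\epsilon)n-(21\theta/\epsilon)n-(80\kappa/\theta)n-28$, the assumed inequality guarantees a valid choice at every step. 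The greedy construction then produces the desired disjoint family of allowed $3$-cubes, and simultaneously swapping on them yields a Latin cube $L'$ that avoids $A$.
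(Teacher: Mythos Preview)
Your high-level strategy---greedy selection of disjoint allowed $3$-cubes, one per conflict, with a counting argument to show a valid choice always exists---matches the paper's. However, there is a genuine gap in how you propose to execute the counting step.

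You state that at each step you ``pick an allowed $3$-cube through $c$ that is disjoint from all previously chosen $3$-cubes,'' and then bound the number of blocked $3$-cubes by a global Markov-type argument (``only an $\epsilon$-fraction of the relevant layers can have been touched by more than $(\kappa/\epsilon)n$ chosen $3$-cubes''). This does not work. The seven non-$c$ corners of a $3$-cube through $c=(i_1,j_1,k_1)$ sweep out exactly seven \emph{fixed} $n$-element structures: the row, column, and file through $c$, the three symbol-sets through $c$ (one per layer type), and the transversal-set through $c$. The number of blocked $3$-cubes equals the number of used cells in these seven specific structures. A global averaging argument tells you that few rows (or layers, or symbol-sets) in the whole cube are heavily loaded, but it says nothing about the particular row $R_{i_1,k_1}$: without further control, previously chosen $3$-cubes---originating from conflicts arbitrarily far from $c$---could have deposited up to $n$ used cells there, blocking every $3$-cube through $c$.

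The paper's proof closes this gap by imposing \emph{additional constraints on the greedy choice}, not merely disjointness: at each step the selected $3$-cube must avoid placing its ``new'' cells in any row, column, file, transversal-set, symbol-set, layer, or block that is already $\epsilon$- or $\theta$-overloaded. These extra constraints are what propagate the invariant that every line/set carries at most $2\kappa n+\epsilon n+1$ used cells and every layer/block at most $4\kappa n^2+\theta n^2+3$; the invariant in turn is what makes the count of blocked $3$-cubes go through at every subsequent step. The terms $80\kappa n/\theta$, $(84\kappa+21\theta)n/\epsilon$, and $21\kappa n+7\epsilon n$ in the hypothesis correspond precisely to the cost of enforcing these three tiers of constraints. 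Your proposal identifies the right parameters and the right flavor of argument, but misses that the $\epsilon,\theta$ thresholds must be enforced \emph{proactively in the choice rule}, not merely invoked as an a posteriori averaging bound.
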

\begin{proof}
For constructing $L'$ from $L$, we will perform a number of swaps on $3$-cubes,
and we shall refer to this procedure as
\textit{$S$-swap}.  We are going to construct a set $S$ of disjoint allowed $3$-cubes such that each conflict of $L$ with $A$ belongs to one of them. A cell that belongs to a $3$-cube in $S$ is called \textit{used} in $S$-swap. 
Since no row in $L$ contains more than $\kappa n$ conflicts with $A$, there are at most $\kappa n^3$ conflicts in $L$, which implies 
that the total number of cells that are used in 
$S$-swap is at most $8 \kappa n^3$.

A row layer, a column layer, a file layer, a row block, a column block, a file block, a symbol block, a symbol-row block, a symbol-column block, 
or a symbol-file block is \textit{overloaded} if such a layer or block
contains at least $\theta n^2$ cells that are used in $S$-swap; note that no more than $\dfrac{8\kappa n^3}{\theta n^2}=\dfrac{8\kappa}{\theta} n$ 
layers or blocks of each type
are $S$-overloaded. A row, a column, a file, a transversal-set, 
or a symbol-set is \textit{overloaded} if 
this row, column, file, transversal-set or symbol-set contains 
at least $\epsilon n$ cells that are used in $S$-swap.

Using these facts, let us now construct our set $S$ by steps; at each step we consider a conflict cell $(i_1,j_1,k_1)$ and include an allowed $3$-cube containing $(i_1,j_1,k_1)$ in $S$. Initially, the set $S$ is empty.

So let us consider a each conflict cell $(i_1,j_1,k_1)$ in $L$; there are at least $\alpha n$ allowed $3$-cubes containing $(i_1,j_1,k_1)$. We choose an allowed $3$-cube 
$$\mathcal{C}=\{(i_1,j_1,k_1),  (i_1,j_2,k_1),  (i_2,j_1,k_1), 
(i_2,j_2,k_1),  (i_1,j_1,k_2), (i_1,j_2,k_2), (i_2,j_1,k_2), 
(i_2,j_2,k_2)\}$$ that satisfies the following:
\begin{itemize}

\item[(1)] The row layer $i_2$, the column layer $j_2$, the file layer $k_2$, the row block containing the row $R_{i_2,k_1}$,
the column block containing the column $C_{j_2,k_1}$, 
the file block containing the file $F_{i_1,j_2}$, the symbol-row block
containing two cells $(i_1,j_2,k_1)$ and $(i_1,j_1,k_2)$, the symbol-column block containing two cells 
$(i_2,j_1,k_1)$ and $(i_1,j_1,k_2)$, the symbol-file block containing two cells $(i_1,j_2,k_1)$ and $(i_2,j_1,k_1)$, the symbol block containing symbol $L(i_1,j_2,k_1)$ are not overloaded. This eliminates at most $\dfrac{10 \times 8\kappa}{\theta} n=\dfrac{80\kappa}{\theta} n$ choices.

With this strategy for including $3$-cubes in $S$, after completing the construction of $S$, every layer (or block) contains at most 
$4\kappa n^2 + (\theta n^2-1) +4$ cells that are used in $S$-swap. Hence, the number of overloaded rows (overloaded columns, overloaded files, overloaded transversal-sets or overloaded symbol-sets) in each layer (or block)  is at most $\dfrac{4\kappa n^2 + \theta n^2 +3}{\epsilon n} \leq \dfrac{4\kappa + \theta}{\epsilon} n+1$. 
Note that here the statement ``each symbol block contains at most 
$\dfrac{4\kappa + \theta}{\epsilon} n+1$ overloaded symbol-sets''
is to be taken with respect to either row layers, column layers
or file layers, i.e., 
when we consider the $n$ different symbol sets of a given symbol
block belonging  to $n$ different row layers
(or $n$ different column layers or $n$ different file layers), 
the number of overloaded such symbol-sets is at most
$\dfrac{4\kappa + \theta}{\epsilon} n+1$.

\item[(2)] Some rows, columns, files, transversal-sets, symbol-sets are not overloaded as the following:
\begin{itemize}
\item[(2a)] The columns $C_{j_2,k_1}$, $C_{j_1,k_2}$, $C_{j_2,k_2}$ 
are not overloaded; 
this eliminates at most $\dfrac{12\kappa + 3\theta}{\epsilon} n+3$ choices
since in the file layer $k_1$ (which contains the column 
$C_{j_2,k_1}$) and in the column layer $j_1$ 
(which contains the column $C_{j_1,k_2}$) 
and in the column block which contains the column $C_{j_1,k_1}$ 
(which also contains the column $C_{j_2,k_2}$), 
there are in total 
at most $\dfrac{4\kappa + \theta}{\epsilon} n+1$ overloaded columns.
Similarly, we need that the  rows $R_{i_2,k_1}$, $R_{i_1,k_2}$, 
$R_{i_2,k_2}$ and the files $F_{i_1,j_2}$, $F_{i_2,j_1}$,
$F_{i_2,j_2}$ are not overloaded; 
this eliminates at most $\dfrac{24\kappa + 6\theta}{\epsilon} n+6$ choices.

\item[(2b)] The transversal-set $\mathfrak{t}_1$ containing $(i_2,j_1,k_1)$ 
and $(i_1,j_2,k_2)$ is not overloaded;
this eliminates at most $\dfrac{4\kappa + \theta}{\epsilon} n+1$ choices,
since in the column block
which contains the column $C_{j_1,k_1}$ 
(which also contains the transversal-set $\mathfrak{t}_1$), there are at 
most $\dfrac{4\kappa + \theta}{\epsilon} n+1$ overloaded transversal-sets. Similarly, we need that the transversal-set containing $(i_1,j_2,k_1)$ and 
$(i_2,j_1,k_2)$, and the transversal-set containing $(i_2,j_2,k_1)$ 
and $(i_1,j_1,k_2)$ are not overloaded;
this eliminates at most $\dfrac{8\kappa + 2\theta}{\epsilon} n+2$ choices.

\item[(2c)] The symbol-set $\mathfrak{s}_1$ containing $(i_2,j_1,k_2)$ and $(i_1,j_2,k_2)$ is not overloaded;
this eliminates at most $\dfrac{4\kappa + \theta}{\epsilon} n+1$ choices,
since in the symbol block which contains $(i_1,j_1,k_1)$ (which also contains the symbol-set $\mathfrak{s}_1$), there are 
at most $\dfrac{4\kappa + \theta}{\epsilon} n+1$ overloaded symbol-sets. Similarly, we need that the symbol-set
containing $(i_2,j_2,k_1)$ and $(i_1,j_2,k_2)$, and
the symbol-set containing $(i_2,j_2,k_1)$ 
and $(i_2,j_1,k_2)$ are not overloaded,
this eliminates at most $\dfrac{8\kappa + 2\theta}{\epsilon} n+2$ choices.
 
\item[(2d)] The symbol-set $\mathfrak{s}_2$ containing 
$(i_1,j_2,k_1)$ and $(i_2,j_1,k_1)$, 
and the symbol-set $\mathfrak{s}_3$ containing 
$(i_1,j_2,k_1)$ and $(i_2,j_2,k_2)$ are not overloaded. 
This eliminates at most $\dfrac{8\kappa + 2\theta}{\epsilon} n+2$ 
choices, since in the file layer $k_1$ 
(which contains the symbol-set $\mathfrak{s}_2$), and in the symbol-column block which contains $(i_1,j_1,k_1)$ (which also contains symbol-set $\mathfrak{s}_3$), there are at most $\dfrac{4\kappa + \theta}{\epsilon} n+1$ overloaded symbol-sets. Similarly, we need that the symbol-set
containing $(i_1,j_1,k_2)$ and $(i_1,j_2,k_1)$, 
the symbol-set containing $(i_1,j_1,k_2)$ and $(i_2,j_2,k_2)$,
the symbol-set containing $(i_2,j_1,k_1)$ and $(i_1,j_1,k_2)$, the symbol-set containing $(i_2,j_1,k_1)$ and $(i_2,j_2,k_2)$ are not overloaded. This eliminates at most $\dfrac{16\kappa + 4\theta}{\epsilon} n+4$ choices.

\end{itemize}
So in total, this eliminates at most $\dfrac{84\kappa + 21\theta}{\epsilon} n+21$ choices.
Note that with this strategy for including $3$-cubes in $S$, after completing the construction of $S$, every row,
column, file, transversal-set, and symbol-set contains at most $2 \kappa n +(\epsilon n-1)+2$ or $2 \kappa n +\epsilon n+1$ cells that are used in $S$-swap.

\item[(3)] Except for $(i_1,j_1,k_1)$, none of the cells in $\mathcal{C}$ are conflicts or used before in $S$-swap.
\begin{itemize}

\item[(3a)] The cell $(i_2,j_1,k_1)$ is not conflict and has not been
used before in $S$-swap; 
this eliminates at most $3\kappa n +\epsilon n+1$ choices since the column 
$C_{j_1,k_1}$ contains at most $\kappa n$ conflict cells and at most $2 \kappa n +\epsilon n+1$ cells that are used in $S$-swap. 
Similarly, we need that the cell $(i_1,j_2,k_1)$ and the cell $(i_1,j_1,k_2)$ 
are not conflicts and has not used before in $S$-swap; in total, 
this eliminates at most $6\kappa n +2\epsilon n+2$ choices.

\item[(3b)] The cell $(i_1,j_2,k_2)$ is not conflict and has not been
used before in $S$-swap. This eliminates at most 
$3\kappa n +\epsilon n+1$ choices, since in the symbol-set 
in row layer $i_1$ that contains the cell
$(i_1,j_1,k_1)$, there are at most $\kappa n$ conflict cells and at most $2 \kappa n +\epsilon n+1$ cells that have been used in $S$-swap. 
Similarly, we need that the cell 
$(i_2,j_1,k_2)$ and the cell $(i_2,j_2,k_1)$ are not conflicts and has not been
used before in $S$-swap; in total, this eliminates at most 
$6\kappa n +2\epsilon n+2$ choices.

\item[(3c)] The cell $(i_2,j_2,k_2)$ is not conflict and has not been
used before in $S$-swap. This eliminates at most $3\kappa n +\epsilon n+1$ choices since in the transversal-set containing the cell $(i_1,j_1,k_1)$, there are at most $\kappa n$ conflict cells and at most $2 \kappa n +\epsilon n+1$ cells that are used in $S$-swap. 
\end{itemize}
So in total, this eliminates at most $21\kappa n +7\epsilon n+7$ choices. 
\end{itemize}

It follows that we have at least
$$\alpha n-21\kappa n -7\epsilon n-\dfrac{84\kappa}{\epsilon}n-\dfrac{21\theta}{\epsilon} n-\dfrac{80\kappa}{\theta} n-28$$
choices for an allowed $3$-cube $\mathcal{C}$ which contains $(i_1,j_1,k_1)$. By assumption, this expression is greater than zero, so we can conclude that there is a $3$-cube satisfying these conditions. Thus we may construct the set $S$ by iteratively adding disjoint allowed $3$-cubes such that each $3$-cube contains a conflict cell.

After this process terminates, we have a set $S$ of disjoint $3$-cubes; 
we swap on all $3$-cubes in $S$ to obtain the Latin cube $L'$. 
Hence, we conclude that we can obtain a Latin cube $L'$ that avoids $A$.
\end{proof}

\end{document}